\newtheorem{theorem}{Theorem}
\theoremstyle{plain}
\newtheorem{corollary}{Corollary}
\newtheorem{example}{Example}
\newtheorem{proposition}{Proposition}
\newtheorem{remark}{Remark}
\numberwithin{equation}{section}
\author{Samson Saneblidze}
\address{A. Razmadze Mathematical Institute\\
Department of Geometry and Topology\\
M. Aleksidze st., 1\\
0193 Tbilisi, Georgia}
 \email{sane@rmi.acnet.ge}
\dedicatory{To Nodar Berikashvili}
\thanks{This research described in this publication was made
possible in part by the grant \\
GNF/ST06/3-007 of the Georgian National Science Foundation}
\subjclass[2000]{Primary 55S37,  55R35; Secondary 55S05,  55P35}
\keywords{cohomology homomorphism, functor D, polynomial algebra,
section}
\title
 {On the homotopy classification of maps}
\begin{document}

\begin{abstract}

We establish certain conditions which imply that a  map $f:X\rightarrow Y$ of
topological spaces is null homotopic when the induced integral cohomology homomorphism
is trivial; one of them is: $H^*(X)$ and $\pi_*(Y)$ have no torsion and $H^*(Y)$ is
polynomial.

\end{abstract}
\date{}
\maketitle

\section{Introduction}
We give certain classification theorems for maps via induced cohomology homomorphism.
Such a classification is based on a new aspects of  obstruction theory to the
section problem in a fibration  beginning  in \cite{berikaBUL},\,\cite{berikaOBST} and
 developed in some directions
  in \cite{saneMono},\,\cite{saneWeak}. Given a fibration  $F\rightarrow E\overset{\xi}{\rightarrow }X,$
  the  obstructions to the section problem of $\xi$ naturally lay in the groups  $H^{i+1}(X;\pi_i(F)),i\geq 0.$
    A basic method here is to use the Hurewicz homomorphism $u_i:
\pi_i(F)\rightarrow H_i(F)$ for passing the above obstructions into the groups  $H^{i+1}(X;H_i(F)),i\geq 0.$
     In particular, this suggests the following condition on a fibration:
 The induced homomorphism
\begin{equation*}\label{HH}
\hspace{-0.8in}(1.1)_m \ \ \ \ \ \ \ \ \  \ \ \ \       u^*:H^{i+1}(X;\pi_i(F))\rightarrow H^{i+1}(X;H_i(F)),\  1\leq i<m,
\end{equation*}
is an inclusion (assuming $u_1:\pi_1(F)\rightarrow H_1(F)$ is an isomorphism). Note
also that the idea of using the Hurewicz map in the obstruction theory goes back to the
paper \cite{Pontrjagin}. (Though its main result  was erroneous, it became  one crucial
point for applications of  characteristic classes (see \cite{D-W}).)

 For the
homotopy classification of maps $X\rightarrow Y,$  the space $F$ in  $(1.1)_m$ is replaced by $\Omega
Y$  and  we establish the following statements.
 Below all topological spaces are assumed to be path connected (hence, $Y$ is also simply connected) and the ground coefficient
ring is the integers ${\mathbb Z}.$ Given a  commutative graded algebra (cga)
$H^*$ and an integer $m\geq 1,$ we say that $H^*$ is \emph{$m$-relation free} if $H^i$ is torsion free  for
 $ i\leq m$  and also there
is no multiplicative relation in $H^i$ for  $i\leq m+1;$
in particular,   $H^{2i-1}=0$ for $1\leq i\leq [\frac{m+2}{2}].$
 We also allow  $m=\infty$ for
$H$ to be polynomial on even degree generators.
\begin{theorem}\label{null}
 Let $f:X\rightarrow Y$ be a map such that the pair $(X,\Omega Y)$ satisfies $(1.1)_m,$
$X$ is an $m$-dimensional polyhedron and
   $H^*(Y)$ is  $m$-relation free.
Then  $f$ is null homotopic if and only if
 \[0=H^*(f): H^*(Y)\rightarrow H^*(X).\]
\end{theorem}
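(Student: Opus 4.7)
The ``only if'' direction is immediate from functoriality, so the content lies in showing that $H^*(f)=0$ forces $f$ to be null homotopic. The plan is to recast null-homotopy as a section problem and then apply obstruction theory step by step. Concretely, $f$ is null homotopic if and only if it lifts through the path-space projection $PY\to Y$, equivalently, if and only if the pulled-back fibration $\Omega Y\to E_f\to X$ with $E_f:=f^{*}PY$ admits a section $s\colon X\to E_f$. I would construct $s$ inductively over the skeleta $X^{(i)}$. Since $Y$ is simply connected we have $\pi_0(\Omega Y)=0$, so there is no obstruction to reaching $X^{(1)}$; and since $\dim X\le m$, the only relevant primary obstructions are those in degrees $i+1$ for $1\le i\le m-1$, which is precisely the range covered by $(1.1)_m$.

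At each stage, the obstruction to extending a section $s_i$ over $X^{(i+1)}$ is a class $o^{i+1}(s_i)\in H^{i+1}(X;\pi_i(\Omega Y))$. By $(1.1)_m$, the Hurewicz-induced map $u^{*}$ into $H^{i+1}(X;H_i(\Omega Y))$ is injective, so it suffices to kill the image $u^{*}o^{i+1}(s_i)$ in the homology-coefficient group. By naturality of both the Hurewicz map and the obstruction construction, this image equals $f^{*}(\tilde o^{i+1})$ for a corresponding universal class $\tilde o^{i+1}\in H^{i+1}(Y;H_i(\Omega Y))$ attached to the path fibration $PY\to Y$ itself.

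The heart of the proof is then to identify $\tilde o^{i+1}$ explicitly enough that the vanishing of $H^{*}(f)$ on $Y$ visibly forces $f^{*}(\tilde o^{i+1})=0$. Here the $m$-relation-free hypothesis on $H^{*}(Y)$ enters: through degrees $\le m+1$, $H^{*}(Y)$ is polynomial on even-degree generators $y_j$, so in the Serre spectral sequence of $PY\to Y$ the algebra $H^{*}(\Omega Y)$ is exterior on transgressive generators $x_j$ in bijection with the $y_j$ (with $\deg x_j=\deg y_j-1$), and $H_i(\Omega Y)$ for $i\le m-1$ is free on the corresponding dual monomial basis. Via transgression, one should then be able to express $\tilde o^{i+1}$, modulo decomposable contributions that are inductively controlled by the previously chosen $s_i$, as a linear combination of elements $y_j\otimes x_j^{\vee}$ with $\deg y_j=i+1$. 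Applying $f^{*}$ and using $H^{*}(f)=0$ on each generator $y_j$ then kills the class, hence $o^{i+1}(s_i)=0$, and the induction extends $s$ one more step.

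The main obstacle I anticipate is exactly this identification of $\tilde o^{i+1}$ with transgressive data, together with the bookkeeping of the secondary, decomposable contributions that depend on the particular choice of $s_i$. This is precisely the circle of ideas developed in \cite{berikaBUL,berikaOBST,saneMono,saneWeak} around the functor $D$ and the Hurewicz reduction of obstructions, so the proof presumably proceeds by invoking that machinery rather than redoing the spectral-sequence analysis by hand. Once each obstruction is shown to vanish, the induction yields a global section of $f^{*}PY\to X$, and hence a null homotopy of $f$.
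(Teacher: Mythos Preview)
Your reduction to the section problem for $f^{*}PY\to X$ and the use of $(1.1)_m$ to replace $\pi_i(\Omega Y)$ by $H_i(\Omega Y)$ are exactly right, and you correctly anticipate that the $D$-functor machinery is what organizes the bookkeeping. But the step where you write ``this image equals $f^{*}(\tilde o^{i+1})$ for a corresponding universal class $\tilde o^{i+1}\in H^{i+1}(Y;H_i(\Omega Y))$'' is a genuine gap, not just missing bookkeeping. An obstruction class $o^{i+1}$ is only defined once a section over the $i$-skeleton has been chosen; on $Y$ the path fibration has no section past the first nontrivial Postnikov stage, so for higher $i$ there is no $\tilde s_i$ on $Y^{(i)}$ and hence no $\tilde o^{i+1}$ to pull back. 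The naturality you invoke does not produce such a class. This is precisely why the paper replaces the tower of individual obstructions by a single natural invariant, the predifferential $d(\pi)\in D(Y;H_*(\Omega Y))$, and proves $D(f)(d(\pi))=0$ rather than $f^{*}(\tilde o^{i+1})=0$.

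The way the paper actually exploits the $m$-relation-free hypothesis is also different from your transgression/exterior-algebra picture. It passes from $C^*(Y)$ to a filtered model $(RH(Y),d_h)\to C^*(Y)$, a multiplicative (non-commutative) resolution of $H^*(Y)$. The point is that when $H^*(Y)$ is $m$-relation free, the \emph{minimal} such resolution in total degrees $\le m$ has a generating set consisting solely of $\smile_1$-monomials $a_1\smile_1\cdots\smile_1 a_{n+1}$ in the degree-zero generators $a_i\in\mathcal V^{0,*}$. One can then \emph{choose} the induced map $RH(f)$ on these generators by the formula $R_0H(f)(a_1)\smile_1\cdots\smile_1 R_0H(f)(a_{n+1})$; since $H^*(f)=0$ kills each $a_i$, this choice gives $RH(f)|_{V^{(m)}}=0$ outright. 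Passing to $D$ via the commutative square with $\mathcal H_Y=RH(Y)\,\hat\otimes\,\Hom(RH_*(\Omega Y),RH_*(\Omega Y))$ then yields $D(f)=0$, and Theorem~\ref{Dnull} finishes. So the $m$-relation-free condition is used algebraically, to control the \emph{resolution} of $H^*(Y)$ by $\smile_1$-syzygies, rather than to describe $H^*(\Omega Y)$ via the Serre spectral sequence.
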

\begin{theorem}\label{null2}
Let $X$ and $Y$ be spaces such that the Hurewicz map $u_i: \pi_i(\Omega Y)\rightarrow
H_i(\Omega Y)$ is an inclusion for $1\leq i<m,$ and
$\operatorname{Tor}\left(H^{i+1}(X), H_i(\Omega Y)/\pi_i(\Omega Y)\right)=0$ when
$\pi_{i}(\Omega Y)\neq 0,$ $X$ is an $m$-dimensional polyhedron and
   $H^*(Y)$ is  $m$-relation free.
 Then a map $f:X\rightarrow Y$ is null
homotopic if and only if
 \[0=H^*(f): H^*(Y)\rightarrow H^*(X).\]
\end{theorem}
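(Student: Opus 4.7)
My plan is to deduce the present statement from Theorem~\ref{null}. Both theorems share the hypotheses on $\dim X$ and on the $m$-relation-free structure of $H^*(Y)$, and both produce the same conclusion; they differ only in how the pair $(X,\Omega Y)$ is controlled. Theorem~\ref{null} requires the cohomological injectivity condition $(1.1)_m$, whereas the present hypotheses replace it with the injectivity of the Hurewicz maps $u_i$ on $\Omega Y$ together with a Tor-vanishing condition on $H^{i+1}(X)$. It therefore suffices to prove that the latter hypotheses imply $(1.1)_m$; an appeal to Theorem~\ref{null} then finishes the argument.

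Fix $i$ with $1\le i<m$. If $\pi_i(\Omega Y)=0$ there is nothing to check, so assume $\pi_i(\Omega Y)\neq 0$. Injectivity of $u_i$ yields the short exact coefficient sequence
\[
0\longrightarrow\pi_i(\Omega Y)\longrightarrow H_i(\Omega Y)\longrightarrow Q_i\longrightarrow 0,\qquad Q_i:=H_i(\Omega Y)/\pi_i(\Omega Y).
\]
Since $X$ is a finite-dimensional polyhedron, $H^*(X)$ is of finite type, and the cohomological universal coefficient formula (applied to the cochain complex of $X$) provides a natural-in-$A$ decomposition
\[
H^{n}(X;A)\cong \bigl(H^{n}(X)\otimes A\bigr)\oplus \operatorname{Tor}\bigl(H^{n+1}(X),A\bigr).
\]
Under it, $u^{*}\colon H^{i+1}(X;\pi_i(\Omega Y))\to H^{i+1}(X;H_i(\Omega Y))$ becomes the direct sum of $1\otimes u_i$ and $\operatorname{Tor}(1,u_i)$. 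The Tor summand is automatically injective because $\operatorname{Tor}_{2}^{\mathbb{Z}}=0$; the tensor summand fits into the Tor long exact sequence
\[
\operatorname{Tor}\bigl(H^{i+1}(X),Q_i\bigr)\longrightarrow H^{i+1}(X)\otimes\pi_i(\Omega Y)\xrightarrow{\,1\otimes u_i\,} H^{i+1}(X)\otimes H_i(\Omega Y),
\]
so the hypothesis $\operatorname{Tor}(H^{i+1}(X),Q_i)=0$ forces $1\otimes u_i$ to be injective. This establishes $(1.1)_m$.

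The main obstacle, as I see it, is securing the dual universal coefficient splitting with exactly the naturality in $A$ used above. The finite-type condition on $H^{*}(X)$ is essential; for a finite polyhedron it is automatic, but for an infinite $m$-dimensional polyhedron one may first pass to a finite subcomplex, using that null-homotopies can be built cellularly. Once naturality is in hand, the remaining verifications are routine diagram-chasing with the Tor long exact sequence of the coefficient SES.
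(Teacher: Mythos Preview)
Your approach is exactly the paper's: verify that the injectivity of $u_i$ together with the Tor hypothesis forces condition $(1.1)_m$, and then invoke Theorem~\ref{null}. The paper's own proof is a single sentence asserting this implication without details; your argument via the cochain universal-coefficient sequence and the Tor long exact sequence is a correct elaboration (one small caveat: the splitting of the UCT sequence is not natural in $A$, so rather than saying $u^*$ ``becomes the direct sum'' you should appeal to the snake lemma on the two natural short exact sequences---the conclusion is unchanged).
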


\begin{theorem}\label{null3}
Let $X$ be an $m$-dimensional polyhedron   and $G$
 a topological group such that  $\pi_i(G)$ is torsion free for $1\leq i<m,$
and $\operatorname{Tor}\left(H^{i+1}(X), \operatorname{Coker}u_i\right)=0,\,$
  $u_i:\pi_i(G)\rightarrow H_i(G)$ when $\pi_i(G)\neq 0.$ Suppose that   the cohomology algebra $H^*(BG)$ of the classifying
space $BG$ is $m$-relation free. Then a map $f:X\rightarrow BG$ is null homotopic if and only
if
 \[0=H^*(f): H^*(BG)\rightarrow H^*(X).\]
\end{theorem}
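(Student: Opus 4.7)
My plan is to deduce Theorem~\ref{null3} from Theorem~\ref{null2} applied with $Y=BG$. The path-loop fibration $G\to PBG\to BG$ has a contractible total space, giving a weak equivalence $\Omega BG\simeq G$ which identifies $\pi_i(\Omega BG)$ with $\pi_i(G)$ and $H_i(\Omega BG)$ with $H_i(G)$. Once the Hurewicz map $u_i\colon \pi_i(G)\to H_i(G)$ is known to be an inclusion for $1\le i<m$, we have $H_i(G)/\pi_i(G)=\operatorname{Coker}u_i$, so the Tor hypothesis assumed here coincides exactly with the one required by Theorem~\ref{null2}. Together with the $m$-relation-free hypothesis on $H^*(BG)$, this places $f\colon X\to BG$ in the setting of Theorem~\ref{null2}, and the classification follows.

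The substantive step is therefore to prove that $u_i$ is an inclusion in the stated range. I would argue rationally. Since $G$ is a topological group, it is a connected $H$-space, and by the Cartan--Serre / Milnor--Moore theorem the rational Hurewicz map $\pi_*(G)\otimes\mathbb Q\to H_*(G;\mathbb Q)$ is injective (in fact onto the primitives of the Hopf algebra $H_*(G;\mathbb Q)$). Because $\pi_i(G)$ is torsion free for $1\le i<m$, the canonical map $\pi_i(G)\hookrightarrow \pi_i(G)\otimes\mathbb Q$ is an inclusion, so the composite $\pi_i(G)\to H_i(G;\mathbb Q)$ is injective. This composite factors as $u_i$ followed by the rationalisation $H_i(G)\to H_i(G;\mathbb Q)$; consequently $\ker u_i$ is contained in the torsion subgroup of $H_i(G)$, while simultaneously it is a subgroup of the torsion-free group $\pi_i(G)$, forcing $\ker u_i=0$.

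The delicate point in this sketch is the rational-to-integral passage, and this is handled exactly by the torsion-freeness hypothesis on $\pi_i(G)$, so no essential new obstacle arises. Note that the $m$-relation-free hypothesis on $H^*(BG)$ plays no role in the injectivity argument itself; it is consumed only when Theorem~\ref{null2} is subsequently invoked. With $u_i$ thus established as an inclusion, Theorem~\ref{null2} applied to $f\colon X\to BG$ delivers the classification stated in Theorem~\ref{null3}.
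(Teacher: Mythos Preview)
Your proposal is correct and follows essentially the same route as the paper: reduce to Theorem~\ref{null2} via the equivalence $\Omega BG\simeq G$, and verify injectivity of the Hurewicz map by rationalising and invoking Milnor--Moore together with the torsion-freeness of $\pi_i(G)$. One small wording slip: you write that $\ker u_i$ lies in the torsion subgroup of $H_i(G)$, but $\ker u_i\subset\pi_i(G)$; the intended (and immediate) conclusion is simply that the injective composite $\pi_i(G)\to H_i(G;\mathbb Q)$ factors through $u_i$, hence $\ker u_i=0$.
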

In fact the two last Theorems follow from the first one, since  their hypotheses imply
$(1.1)_m,$ too. A main example of $G$ in Theorem 3 is the unitary group $U(n)$ with
$m=2n,$   since $u_{2i}$ is a trivial inclusion  and  $u_{2i-1}$ is an inclusion given
by multiplication by the integer $(i-1)!$ for  $1\leq i\leq n.$ A $U(n)$-principal
fibre bundle over $X$ is classified by a map $X\rightarrow BU(n).$ Suppose that all its
Chern classes are trivial, then $H^*(f)=0$ and by Theorem 3, $f$ is null homotopic.
Therefore the $U(n)$-principal fibre bundle is trivial. Thus, we  have in fact deduced
the following statement, the main result of \cite{Peterson} (compare also
\cite{Ethomas}).
\begin{corollary}
Let $\xi$ be a $U(n)$-principal fibre bundle over $X$ with $\dim X\leq 2n$ and the only torsion in $H^{2i}(X)$ is relatively prime to $(i-1)!.$
Then $\xi$ is trivial if and only if the Chern classes $c_k(\xi)=0$ for $1\leq k \leq n.$
\end{corollary}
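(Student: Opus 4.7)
The plan is to recognize this as a direct application of Theorem \ref{null3} to the topological group $G=U(n)$, with $m=2n$, and then translate the resulting statements into bundle-theoretic language.

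First I would invoke the standard classification: a $U(n)$-principal fibre bundle $\xi$ over $X$ is classified, up to isomorphism, by a homotopy class of maps $f:X\to BU(n)$, and $\xi$ is trivial if and only if $f$ is null homotopic. Under this correspondence, $f^{*}(c_{k})=c_{k}(\xi)$ for the universal Chern classes $c_{k}\in H^{2k}(BU(n))$.

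Next I would verify the three input hypotheses of Theorem \ref{null3} for $G=U(n)$, $m=2n$. (i) Since $H^{*}(BU(n))=\mathbb{Z}[c_{1},\dots,c_{n}]$ is polynomial on generators in even degrees, it is $m$-relation free for every $m$, in particular for $m=2n$. (ii) In the range $1\le i<2n$ the homotopy groups of $U(n)$ are either zero or isomorphic to $\mathbb{Z}$ (these are precisely $\pi_{2i-1}(U(n))\cong\mathbb{Z}$ for $1\le i\le n$), so $\pi_{i}(U(n))$ is torsion free. (iii) As recalled in the paragraph preceding the corollary, the Hurewicz map $u_{2i}$ is a trivial inclusion (the source vanishes) and $u_{2i-1}:\pi_{2i-1}(U(n))\to H_{2i-1}(U(n))$ is multiplication by $(i-1)!$ on $\mathbb{Z}$, whose cokernel is $\mathbb{Z}/(i-1)!$. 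The hypothesis that the torsion of $H^{2i}(X)$ is coprime to $(i-1)!$ is exactly what makes
\[
\operatorname{Tor}\bigl(H^{2i}(X),\operatorname{Coker}u_{2i-1}\bigr)=\operatorname{Tor}\bigl(H^{2i}(X),\mathbb{Z}/(i-1)!\bigr)=0,
\]
while for even $i$ the cokernel $\operatorname{Coker}u_{i}=H_{i}(U(n))$ is only relevant when $\pi_{i}(U(n))\neq 0$, which does not occur in the relevant range.

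Finally I would observe that since $H^{*}(BU(n))$ is generated as a ring by $c_{1},\dots,c_{n}$, the condition $H^{*}(f)=0$ (in positive degrees) is equivalent to $f^{*}(c_{k})=c_{k}(\xi)=0$ for $k=1,\dots,n$. Hence Theorem \ref{null3} applies and gives $f\simeq\ast$, which by the classification translates to the triviality of $\xi$. The converse implication is immediate. The only step requiring care is the bookkeeping of the Tor conditions against the explicit cokernels $\mathbb{Z}/(i-1)!$; there is no genuine obstacle, as the heavy lifting has already been done in Theorem \ref{null3}.
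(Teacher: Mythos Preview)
Your proposal is correct and follows exactly the argument the paper gives in the paragraph immediately preceding the corollary: apply Theorem~\ref{null3} with $G=U(n)$ and $m=2n$, using that $u_{2i-1}$ is multiplication by $(i-1)!$, and translate via the classifying map. One small imprecision: $\operatorname{Coker}u_{2i-1}$ is not literally $\mathbb{Z}/(i-1)!$ since $H_{2i-1}(U(n))$ may contain decomposable classes, but these form a free summand and hence contribute nothing to $\operatorname{Tor}$, so your computation stands.
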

While the proof of this statement  in \cite{Peterson} does not admit an
immediate generalization for an infinite dimensional $X,$ Theorem 3 does by taking
$m=\infty.$ Furthermore, for  $G=U$ and  $X=BU$ recall that $[BU,BU]$ is an abelian
group, so  we get that two maps $f,g:BU\rightarrow BU$ are homotopic if and only if
$H^*(f)=H^*(g): H^*(BU;\mathbb Q)\rightarrow H^*(BU;\mathbb Q)$ (compare \cite{J-M-O},
\cite{notbohm}). Note also that when $m=\infty$ in Theorem 3,  $H^*(Y)$ must have
infinitely many polynomial generators (e.g. $Y=BU,BSp$) as it follows from the solution
of
   the Steenrod
problem  for finitely generated polynomial rings \cite{Ande-Grod}
(the underlying spaces do not have  torsion free homotopy groups in all degrees).

Finally, note that beside  obstruction theory we apply a main ingredient of the proof of Theorem
1 is an explicit form of  minimal multiplicative (non-commutative) resolution of an
$m$-relation free cga (of a polynomial  algebra when $m=\infty$) in total degrees
$\leq m$ (compare \cite{saneMono}, \cite{saneFilt}). Namely, the generator set of the
resolution in the above range  only consists of monomials formed by
 $\smile_1$ products. Remark  that the idea of using  $\smile_1$ product when dealing
 with polynomial cohomology,
  especially  in the context  of homogeneous spaces,
 has been realized  by several authors
\cite{May}, \cite{Gug-May},  \cite{Munkholm}, \cite{H-M-S}
 (see also \cite{McCleary2} for further references).

In sections 2 and 3 we recall certain basic definitions and constructions, including
the functor $D(X;H_*)$ \cite{berika},\,\cite{berikaZUR}, for the aforementioned
obstruction theory, and in section 4 prove  Theorems 1-3.

I am grateful to Jesper Grodal for helpful comments. I  thank to Jim Stasheff for
helpful comments and suggestions.

\newpage

\section{Functor D(X;H)}

Given  a bigraded differential algebra  $A=\{A^{i,j}\}$  with $d:A^{i,j}\rightarrow
A^{i+1,j}$ and total degree $n=i+j,$ let
 $D(A)$ be  the set \cite{berikaZUR}
   defined by $D(A)=M(A)/G(A)$ where
\[
\begin{array}{llll}
M(A)&=&\{a\in A^{1}\,|\,da=-aa, \,\, a=a^{2,-1}+a^{3,-2}+\cdots \},\\
G(A)&=&\{p\in A^{0}\,|\, p=1+ p^{1,-1}+p^{2,-2}+\cdots   \},\\
\end{array}
\]
and the action $M(A)\times G(A)\rightarrow M(A)$ is given by the  formula
\begin{equation}\label{gauge}
a\ast p=p^{-1}ap+ p^{-1}dp.
\end{equation}
 In other words, two elements $a,b\in M(A)$ are on the
same orbit if there is $p\in G(A),\,  p=1+p',$ with
\begin{equation}\label{action}
b-a=ap' -p'b+dp'.
\end{equation}
Note that an element $a=\{a^{*,*}\}$ from $M(A)$ is of total degree $1$ and  referred
to as \emph{twisting}; we usually  suppress the second degree below. There is   a
distinguished element in the set $D(A),$ the class of $0\in A,$ and denoted by the same symbol.

There is simple but useful
  (cf. \cite{saneMono})
\begin{proposition}\label{derivation}
 Let $f,g :A^{*,*}\rightarrow B^{*,*}$  be two
  dga maps  that preserve the bigrading. If they
    are $(f,g)$-derivation homotopic via $s:A^{i,j}\rightarrow B^{i-1,j},$ i.e.,
$f-g=sd+ds$ and $s(ab)=(-1)^{|a|}fasb+sagb,$
     then
  $D(f)=D(g):D(A)\rightarrow D(B).$
\end{proposition}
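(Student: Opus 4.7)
The plan is to show that for every twisting $a\in M(A)$, the images $f(a)$ and $g(a)$ lie on the same $G(B)$-orbit of $M(B)$, with an explicit gauge element built from the derivation homotopy $s$. Since $f,g$ are dga maps preserving the bigrading, applying either to the Maurer--Cartan relation $da=-aa$ shows $f(a),g(a)\in M(B)$ with the same bigrading profile $B^{2,-1}+B^{3,-2}+\cdots$ as $a$.

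The natural candidate is $p := 1 - s(a)\in G(B)$. Because $s$ lowers the first degree by $1$ and preserves the second, and $a=a^{2,-1}+a^{3,-2}+\cdots$, the element $s(a)$ lies in $B^{1,-1}+B^{2,-2}+\cdots$, so $p$ has the required shape for an element of $G(B)$; the inverse $p^{-1}=1+s(a)+s(a)^{2}+\cdots$ is a finite sum on each bidegree thanks to this filtration lower bound, so no convergence issue arises.

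The core step is to verify the orbit equation (2.2) with $p'=-s(a)$:
\[
 g(a)-f(a) \;=\; f(a)(-s(a))-(-s(a))g(a)+d(-s(a)).
\]
For the right-hand side, the derivation homotopy $f-g=sd+ds$ gives $d(s(a))=f(a)-g(a)-s(da)$. Using $da=-aa$ together with the $(f,g)$-derivation rule $s(ab)=(-1)^{|a|}f(a)s(b)+s(a)g(b)$ at $|a|=1$ yields $s(aa)=-f(a)s(a)+s(a)g(a)$ and hence $s(da)=f(a)s(a)-s(a)g(a)$. Substituting gives $d(s(a))=f(a)-g(a)-f(a)s(a)+s(a)g(a)$, and plugging into the right-hand side above produces exactly $g(a)-f(a)$ after cancellation. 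This shows $g(a)=f(a)\ast p$, so $f$ and $g$ send each orbit of $M(A)/G(A)$ into a single orbit of $M(B)/G(B)$, proving $D(f)=D(g)$.

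The whole argument is just a careful sign-tracking computation; the only real obstacle is verifying that $p$ really belongs to $G(B)$, which is automatic from the bigrading assumption on $s$, and keeping the three sign conventions (twisting equation, derivation homotopy, $(f,g)$-Leibniz) consistent.
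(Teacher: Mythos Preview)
Your proof is correct and is essentially the same argument as the paper's: both compute $f(a)-g(a)=ds(a)+s(da)=ds(a)+f(a)s(a)-s(a)g(a)$ and recognize this as the orbit relation (2.2) with $p'=-s(a)$. Your version merely adds the verifications that $f(a),g(a)\in M(B)$ and $1-s(a)\in G(B)$, which the paper leaves implicit.
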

\begin{proof}
Given $a\in M(A),$   apply   the $(f,g)$-derivation homotopy $s$ to get
$fa-ga=dsa+sda=dsa+s(-aa)=dsa+fasa-saga.$ From this we deduce that $fa$ and $ga$ are
equivalent by (\ref{action}) for  $p'=-sa.$
\end{proof}

Another useful property of $D$ is fixed by the following   comparison theorem
\cite{berika},\,\cite{berikaZUR}:
\begin{theorem}\label{comparison}
 If  $f:A\rightarrow B$ is   a cohomology isomorphism, then   $D(f):D(A)\rightarrow D(B)$
is a bijection.
\end{theorem}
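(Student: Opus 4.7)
\emph{Strategy.} The plan is to run a single inductive scheme in both directions. The key observation is that the Maurer--Cartan equation $da=-a^2$ and the gauge relation $b-a=ap'-p'b+dp'$ both decouple, piece by piece, along the first index of the bigrading: twistings have the form $a=\sum_{i\geq 2}a^{i,1-i}$ and gauge elements the form $p=1+\sum_{i\geq 1}p^{i,-i}$, so at each fixed filtration level the relation to be solved has the form ``a new cocycle equals something already built, plus a coboundary.'' The cohomology isomorphism $f$ is exactly what will make such a cocycle/coboundary problem transferable between $A$ and $B$.

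\emph{Surjectivity.} For surjectivity of $D(f)$, I would start with $b\in M(B)$ and build $a\in M(A)$ as follows. The leading piece $b^{2,-1}$ is a $d$-cocycle, because the bidegree-$(3,-1)$ component of $db=-b^2$ has no quadratic contribution. Since $f$ is a quasi-isomorphism, I can pick a cocycle $a^{2,-1}\in A^{2,-1}$ and $c\in B^{1,-1}$ with $f(a^{2,-1})=b^{2,-1}+dc$; replacing $b$ by $b\ast(1+c)$ aligns filtration~$2$ while preserving the class in $D(B)$. Inductively, suppose a partial twisting $a_n=a^{2,-1}+\cdots+a^{n,1-n}$ has been built satisfying the Maurer--Cartan equation modulo filtration $>n$, with $f(a_n)$ matching the current representative of $b$ through filtration $n$. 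A direct calculation from the two Maurer--Cartan equations shows that the leading discrepancy $(b-f(a_n))^{n+1,-n}$ is a $d$-cocycle; lifting its class through $f$ gives $a^{n+1,-n}$ together with a further gauge correction in $G(B)$, after which the induction continues. Passing to the limit yields $a\in M(A)$ and a total gauge element in $G(B)$ relating $f(a)$ to $b$.

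\emph{Injectivity and the main obstacle.} The same inductive scheme handles injectivity. Given $a,a'\in M(A)$ with $f(a)\ast q=f(a')$ for some $q\in G(B)$, at stage $n$ one has to solve in $A$ an equation of the form $dp^{n,-n}=\alpha_n$, where $\alpha_n$ is a $d$-cocycle in $A$ built from lower-filtration data whose $f$-image is already a $d$-coboundary in $B$ (coming from the corresponding piece of $q$); the cohomology isomorphism then furnishes $p^{n,-n}\in A^{n,-n}$, and assembly produces $p\in G(A)$ with $a\ast p=a'$. The hardest part will be the bookkeeping rather than the underlying idea: at each step I must check that the ``obstruction'' really is a $d$-cocycle (which is extracted from the Maurer--Cartan equations in the previously settled filtrations) and that the gauge correction used to absorb a filtration-$(n+1)$ discrepancy does not disturb the already-aligned filtrations $\leq n$ (immediate, since such a correction sits in filtration $\geq n$ and twistings start at filtration $2$, so the induced commutator corrections only appear from filtration $n+2$ upward). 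Once this bookkeeping is in place, Theorem~\ref{comparison} follows purely from a filtration-by-filtration application of the cohomology isomorphism~$f$.
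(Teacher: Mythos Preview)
The paper does not supply its own proof of Theorem~\ref{comparison}; the result is quoted from \cite{berika},\,\cite{berikaZUR} and used as a black box. Your filtration-by-filtration obstruction scheme is precisely the expected route to such a comparison statement, and the outline is correct. There is, however, a concrete misstep in the surjectivity argument and an underjustified claim in the injectivity argument.

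For surjectivity you assert that the discrepancy $(b-f(a_n))^{n+1,-n}$ is a $d$-cocycle. Since your truncated $a_n=a^{2}+\dotsb+a^{n}$ has no component in bidegree $(n+1,-n)$, this discrepancy is simply $b^{n+1}$, and the Maurer--Cartan equation for $b$ gives $db^{n+1}=-\sum_{i+j=n+2}b^{i}b^{j}$, which is generally nonzero. The object that \emph{is} a cocycle at this stage lives in $A$: the extension obstruction $O_n:=\sum_{i+j=n+2}a^{i}a^{j}$, whose closedness follows from the MC relations already imposed on $a^{2},\dots,a^{n}$. Since $f(O_n)=-db^{n+1}$ is exact, injectivity of $H(f)$ yields $a^{n+1}$ with $da^{n+1}=-O_n$; then $f(a^{n+1})-b^{n+1}$ is an honest $B$-cocycle, and surjectivity of $H(f)$ lets you correct $a^{n+1}$ by an $A$-cocycle so that this difference becomes a coboundary $d\beta$; gauging $b$ by $1+\beta$ aligns filtration $n{+}1$ without disturbing the lower ones. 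For injectivity, set $\tilde a=a\ast p_{(n)}$ and $\bar q=q^{-1}f(p_{(n)})$; the filtration-$(n{+}1)$ component of $f(\tilde a)-f(a')$ equals $d(\bar q')^{n}$ \emph{plus} commutator terms $\sum_{j}[\,f(a')^{\,n+1-j},(\bar q')^{j}\,]$, and these are not visibly exact from ``the corresponding piece of $q$'' alone. One has to control the lower $(\bar q')^{j}$ as well --- e.g.\ by choosing each $(p')^{j}$ so that $f((p')^{j})-(q')^{j}$ is a coboundary, which the surjectivity of $H(f)$ allows --- before the obstruction in $B$ becomes exact and can be pulled back through $H(f)$. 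With these two corrections your induction closes as described.
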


For our purposes the main example  of $D(A)$ is the following (cf.
\cite{berika},\,\cite{berikaZUR})
\begin{example}\label{DX}
Fix a graded (abelian) group $H_{*}.$ Let
\[\rho : (R_{\geq 0}H_{q}, {\partial}^{R})\rightarrow
H_{q},\,\,\,{\partial}^{R }:R_{i}H_{q}\rightarrow R_{i-1}H_{q},\] be its free
group resolution. Form the  bigraded $\operatorname{Hom}$ complex
 $$(\mathcal{R}^{*,*},d^{R})=
\left(Hom(RH_{*}\,,RH_{*}),d^{R}\right),\ \ \ \ d^{R}: \mathcal{R}^{s,t}\rightarrow
\mathcal{R}^{s+1,t};$$ an element $f\in \mathcal{R}^{*,*}$ has bidegree $(s,t)$ if
  $f: R_{j}H_{q}\rightarrow
R_{j-s}H_{q-t}.$  Note also that $\mathcal{R}^{*,*}$ becomes  a dga with
respect to the composition product.

 Given a  topological space $X,$ consider the dga
\[(\mathcal H,\nabla)=( C^{*}(X;\mathcal{R}),\nabla
=d^{C}+d^{R})\]
 which is bigraded via
 $\mathcal H^{r,t}=\prod
_{r=i+j}C^{i}(X;\mathcal{R}^{j,t}).$ Thus we get
\[\mathcal H=\{\mathcal H^{n}\},\ \ \ \ \  \mathcal
H^{n}=\prod_{n=r+t}\mathcal H^{r,t},\ \ \  \ \nabla : \mathcal H^{r,t} \rightarrow
\mathcal H^{r+1,t}.\]
 We refer to
 $r$ as the {\em perturbation} degree which is
mainly exploited by inductive  arguments below. For example, for a twisting cochain
$h\in M(\mathcal{H}),$ we have \[ h=h^{2} + \dotsb +h^{r}+\dotsb
,\,\,\,\,\,\,\,\,\,\,h^{r}\in \mathcal H^{r,1-r},\] satifying  the following sequence
of equalities:
\begin{equation}\label{sequence}
\nabla (h^{2})=0, \,\,\,\,\nabla (h^{3})=-h^{2}h^{2},\,\,\,\,\nabla
(h^{4})=-h^{2}h^{3}-h^{3}h^{2},\dotsc .
\end{equation}

Define $$D(X;H_{*})=D(\mathcal H,\nabla).$$
 Then $D(X;H_{*})$
   becomes a functor on the category of topological spaces and
 continuous  maps  to the category  of  pointed sets.
\end{example}

\begin{example}\label{example2}
Given two dga's $B^*$ and $C^{*,*}$ with $d^B: B^i\rightarrow B^{i+1}$ and $d^C_1:
C^{j,t}\rightarrow C^{j+1,t},\,$ $d^C_2=0,$ let $A=B\hat{\otimes}C.$ View $(A,d)$ as
bigraded via $A=\{A^{r,t},d\},$ $A^{r,t}=\prod_{r=i+j}B^i\otimes C^{j,t},$
$d=d^B\otimes 1+1\otimes d^C_1.$
 Note also that the dga $(\mathcal{H},\nabla)$ in the previous example can also be
 viewed  as a special case of the above tensor product algebra by setting
 $B^*=C^*(X)$ and $C^{*,*}=\mathcal{R}^{*,*}.$
\end{example}

\section{Predifferential $d(\xi)$ of a fibration}

Let $F\rightarrow E\overset{\xi}{\longrightarrow} X$ be a fibration.  In \cite{berika} a unique element of
$D(X;H_*(F))$  is naturally  assigned to $\xi;$
 this element is denoted by $d(\xi)$ and referred to
as the \emph{predifferential} of $\xi.$ The naturalness  of $d(\xi)$ means that for a
map $f:Y\rightarrow X,$
\begin{equation}\label{pred}
d(f(\xi))= D(f)(d(\xi)),
\end{equation}
where $f(\xi)$ denotes the induced fibration on $Y.$

Originally $d(\xi)$ appeared in homological perturbation theory for measuring
the non-freeness of the Brown-Hirsch model: First, in \cite{hirsch}
G. Hirsch modified E. Brown's twisting tensor
product model $(C_*(X)\otimes C_*(F),d_{\phi})\rightarrow (C_*(E),d_E)$
\cite{Brown}, \cite{Gugenheim} by replacing the chains $C_*(F)$
 by its homology $H_*(F)$ provided the homology is a free module.
 In \cite{berika} the Hirsch model was extended for arbitrary $H_*(F)$
  by replacing it by a free module resolution $RH_*(F)$
   to obtain $(C_*(X)\otimes RH_*(F),d_h)$ in which
   $d_h=d_X\otimes 1+1\otimes d_{F}+-\cap h$ and
    $h$ is just an element of $M(\mathcal{H})$ in
     Example \ref{DX} with $H_*=H_*(F).$ Furthermore,
     to an  isomorphism
      $p:(C_*(X)\otimes RH_*(F),d_h)\rightarrow (C_*(X)\otimes RH_*(F),d_{h'})$
      between two such models answers an equivalence relation
$h\sim_{p}h'$ in $M(\mathcal{H}),$ and the class of $h$ in $D(X;H_*(F))$ is identified
as $d(\xi).$ More precisely, we recall some basic constructions for the definition of
$d(\xi)$ we need for the obstruction  theory in question.

For convenience, assume that $X$ is a polyhedron and that $\pi_1(X)$ acts trivially on
$H_*(F).$ Then $\xi$ defines the following colocal system of   chain complexes over
$X:$ To each simplex $\sigma \in X$ is assigned the singular chain complex $(C_{*}(F_{\sigma}),\gamma _{\sigma})$ of the space
$F_{\sigma}=\xi^{-1}(\sigma):$
\[    X \ni     \sigma   \longrightarrow (C_{*}(F_{\sigma}),\gamma _{\sigma})\subset (C_*(E),d_E),\] and  to  a
pair $\tau\subset \sigma$ of simplices  an   induced chain map
\[C_{*}(F_{\tau})\rightarrow C_{*}(F_{\sigma}).\] Set
$\mathcal{C}_{\sigma}=\{\mathcal{C}^{s,t}_{\sigma}\},\, \mathcal{C}^{s,t}_{\sigma}
=\operatorname{Hom}^{s,t}(R_*H_{*}(F),C_{*}(F_{\sigma}))$ where $C_*$ is regarded as
bigraded via $C_{0,*}=C_*, C_{i,*}=0,\,i\neq 0,$ and $f:R_jH_q(F)\rightarrow
C_{j-s,q-t}(F_{\sigma})$ is of bidegree $(s,t).$ Then we obtain  a  colocal system of
cochain complexes $\mathcal{C}=\{\mathcal{C}^{*,*}_{\sigma}\}$ on $X.$
  Define $\mathcal{F}$   as the simplicial
cochain complex $C^*(X;\mathcal{C})$ of $X$ with coefficients  in  the  colocal system
$\mathcal{C}.$ Then
\[
\mathcal F=\{{\mathcal F}^{i,j,t}\},\,\,\,\,{\mathcal
F}^{i,j,t}=C^{i}(X;\mathcal{C}^{j,t}). \] Furthermore, obtain the bicomplex
$\mathcal{F}=\{\mathcal F^{r,t}\} $  via \[\mathcal F^{r,t}=\prod _{r=i+j}\mathcal
F^{i,j,t},\ \delta:{\mathcal F}^{r,t}\rightarrow {\mathcal F}^{r+1,t},\ \gamma
:{\mathcal F}^{r,t}\rightarrow {\mathcal F}^{r,t+1},\
 \delta
=d^{C}\!\!+\partial^{R},\ \gamma =\{\gamma_{\sigma}\},\] and finally set
\[\mathcal
{F}=\{\mathcal {F}^{m}\}, \ \ \mathcal {F}^{m}=\prod_{m=r+t}\mathcal {F}^{r,t}.\]
   We have a natural dg pairing
$$(\mathcal{F},\delta+\gamma)\otimes     ({\mathcal   H},\nabla)\rightarrow
({\mathcal F},\delta+\gamma)$$
 defined by
 $\smile$ product on $C^*(X;-)$ and  the obvious pairing
$\mathcal{C}_{\sigma}\otimes \mathcal{R}\rightarrow \mathcal{C}_{\sigma} $
 in coefficients; in particular we have  $\gamma (fh) = \gamma (f)h$
 for $f\otimes h\in \mathcal{F}\otimes \mathcal{H}.$
 Denote $\mathcal{R}_{_{\#}}=Hom(RH_{*}(F),H_{*}(F))$ and define
\[({\mathcal F}_{_{\#}},\delta_{_{\#}}):=
 (H (\mathcal F,\gamma),\delta_{_{\#}})=(C^{*}(X;\mathcal{R}_{_{\#}}) ,\delta_{_{\#}}).
 \]
Clearly,  the above pairing  induces the following dg pairing \[({\mathcal
F}_{_{\#}},\delta_{_{\#}}) \otimes (\mathcal H,\nabla)\rightarrow  ({\mathcal
F}_{_{\#}},\delta_{_{\#}}).\]
 In
other words, this pairing is also defined by  $\smile$ product on $C^*(X;-)$ and
the
 pairing $\mathcal{R}_{_{\#}}\otimes \mathcal{R}\rightarrow \mathcal{R}_{_{\#}} $
 in coefficients.
Note that $\rho$ induces an epimorphism of chain complexes
 \[\rho^{*}: (\mathcal
H,\nabla)\rightarrow (\mathcal F_{_{\#}},\delta_{_{\#}}).\]
 In turn, $\rho^*$ induces an isomorphism in cohomology.

Consider the following equation
 \begin{equation}\label{equation} (\delta+\gamma)(f) = fh
\end{equation} with respect to a pair $(h,f)\in \mathcal{H}^1\times \mathcal{F}^{0} ,$
$$
\begin{array}{lll}
h= h^{2}+\dotsb+ h^{r}+\dotsb ,    & h^{r}\in {\mathcal H}^{r,1-r}, \\
f= f^{0}+ \dotsb + f^{r}+ \dotsb ,  & f^{r} \in {\mathcal F}^{r,-r},
\end{array}
$$
satisfying
 the  initial conditions:
$$
\begin{array}{llll}
\nabla(h)=-hh &\\

 \gamma (f^{0})=0, & [f^{0}]_{\gamma}=\rho ^{*}(1)\in {\mathcal F}^{0,0}_{_{\#}},\ \ 1\in \mathcal H.
\end{array}
$$
Let $(h,f)$ be a solution of the above equation.
 Then   $d(\xi)\in D(X;H_*(F))$ is defined  as the class of $h.$
   Moreover, the transformation of $h$ by (\ref{gauge}) is extended to
pairs $ (h,f)$ by the map
\[   (M(\mathcal{H})\times \mathcal{F}^{0})\times (G(\mathcal{H})\times
\mathcal{F}^{-1})\rightarrow M(\mathcal{H})\times \mathcal{F}^{0} \] given for $((h,f),
(p,s))\in(M(\mathcal{H})\times \mathcal{F}^{0})\times (G(\mathcal{H})\times
\mathcal{F}^{-1})$
  by the
formula
\begin{equation}\label{pair}
(h,f)\ast (p,s)=\left(h\ast p\,,\,   fp+ s(h\ast p)+(\delta+\gamma)(s)\right).
\end{equation}
We have that a solution $(h,f)$ of the equation exists and is unique up to the above
action. Therefore, $d(\xi)$ is well defined.

Note that action (\ref{pair}) in particular has a property that if $(\bar{h},\bar
{f})=(h,f)\ast (p,s)$ and $h^r=0 $ for $2\leq r\leq n,$ then in view of (\ref{action})
one gets the equalities
\begin{equation}\label{zero}
\bar{h}^{n+1}=h\ast (1+ p^n)=h^{n+1}+\nabla(p^n).
\end{equation}

\subsection{Fibrations  with $d(\xi)=0$  }
 The main fact of this subsection is the following theorem from \cite{berikaBUL}:
\begin{theorem}\label{section}
Let $F\rightarrow E\overset{\xi}{\longrightarrow} X$ be a fibration such that $(X,F)$
satisfies $(1.1)_m$. If the restriction of $d(\xi)\in D(X;H_*(F))$ to $d(\xi)|_{X^m}\in
D(X^m;H_*(F))$ is zero, then $\xi$ has a section on the $m$-skeleton of $X.$ The case
of $m=\infty,$ i.e., $d(\xi)=0,$ implies the existence of a  section on $X.$
\end{theorem}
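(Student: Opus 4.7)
The plan is to prove Theorem~\ref{section} by induction on the skeletal dimension of $X$, translating the hypothesis $d(\xi)|_{X^m}=0$ into successive vanishings of classical obstruction classes. Start with a section of $\xi$ over $X^1$, which exists by connectedness and a choice of basepoint. Suppose inductively that a section $s_k\colon X^k\to E$ of $\xi$ has been built for some $1\le k<m$. The classical obstruction to extending $s_k$ across the $(k+1)$-cells is a cohomology class $\omega_{k+1}(s_k)\in H^{k+1}(X;\pi_k(F))$. Since hypothesis $(1.1)_m$ says that $u^*\colon H^{k+1}(X;\pi_k(F))\to H^{k+1}(X;H_k(F))$ is injective, it suffices to show $u^*\omega_{k+1}(s_k)=0$.

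Next I would identify this image with the cohomology class of the leading component $h^{k+1}$ of a well-chosen representative $(h,f)$ of $d(\xi)$. Under the quasi-isomorphism $\rho^*\colon(\mathcal{H},\nabla)\to(\mathcal{F}_\#,\delta_\#)$ the class $[h^{k+1}]$ transfers to a cohomology class in $H^{k+1}(X;H_k(F))$. The defining equation $(\delta+\gamma)(f)=fh$ together with the meaning of the pair $(h,f)$ in the Brown--Hirsch model (where $f^k$ records a choice of partial section and $h^{k+1}$ records the obstruction to continuing it, read through the free resolution $RH_*(F)\to H_*(F)$) yields the identification $[h^{k+1}]=u^*\omega_{k+1}(s_k)$.

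The inductive step is then driven by~(\ref{zero}). Because $d(\xi)|_{X^m}=0$, there exists $p=1+p'\in G(\mathcal{H}|_{X^m})$ with $h\ast p=0$ on the $m$-skeleton. Assume by induction that successive gauge transformations have arranged $h^r=0$ for all $r\le k$; then the sequence~(\ref{sequence}) forces $\nabla h^{k+1}=0$, so $h^{k+1}$ is a cocycle. Applying the next piece $p^k$ of the trivializing gauge and invoking~(\ref{zero}), the updated leading term satisfies $\bar h^{k+1}=h^{k+1}+\nabla(p^k)=0$ on $X^m$, so $[h^{k+1}|_{X^m}]=0$ in $H^{k+1}(X^m;H_k(F))$. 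By the identification above this equals $u^*\omega_{k+1}(s_k)|_{X^m}$, and by injectivity from $(1.1)_m$ the obstruction vanishes. Hence $s_k$ extends to $s_{k+1}$, and iterating for $k<m$ produces a section over $X^m$; when $m=\infty$ the same induction continues indefinitely, yielding a section over all of $X$.

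The main obstacle is the identification of step two: matching the algebraic class $[h^{k+1}]$ coming from the free resolution $RH_*(F)$ with the geometric obstruction $u^*\omega_{k+1}(s_k)$. This requires tracing through the Hirsch--Berikashvili construction in enough detail to verify that the component $f^k$ of a solution $(h,f)$ of~(\ref{equation}) really does encode a section over $X^k$, and that the cocycle $h^{k+1}$ measuring the failure of $(\delta+\gamma)(f)=fh$ one degree higher is exactly the Hurewicz image of the classical obstruction. Once this dictionary is in place, the remainder is a formal induction on $k$ combining the degree-by-degree gauge action~(\ref{zero}) with the injectivity supplied by $(1.1)_m$.
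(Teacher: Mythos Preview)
Your outline is correct and follows essentially the paper's own argument: skeletal induction, identification of $[h^{k+1}]$ with the Hurewicz image of the classical obstruction, the gauge relation~(\ref{zero}) together with $d(\xi)|_{X^m}=0$ to kill that class, and injectivity from $(1.1)_m$ to extend the section. The dictionary you flag as the main obstacle is exactly what the paper makes precise, by isolating the \emph{transgressive component} $(h_{tr},f_{tr})$ of a solution of~(\ref{equation}), identifying $f^r_{tr}$ with the cochain $c^r_{\chi}$ of a partial section $\chi^r\colon X^r\to E$, and checking $\rho^*(h^{n+1}_{tr})=u^{\#}(c(\chi^n))$ at the cochain level.
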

\begin{proof}Given a pair $(h,f)\in \mathcal{H}\times \mathcal{F},$
let $(h_{tr},f_{tr})$ denote  its  component    that lies in
\begin{equation*}
 C^{*}(X;Hom(H_0(F),RH_*(F)))\times
  C^{*}(X;Hom(H_0(F),C_*(F_{\sigma}))).
\end{equation*}
Below $(h_{tr},f_{tr})$ is referred to as the \emph{transgressive} component of
$(h,f).$ Observe that since $RH_0(F)=H_0(F)=\mathbb Z,$ we can  view
$(h^{r+1}_{tr},f^r_{tr})$ as a pair of cochains laying in $ C^{>r}(X;RH_r(F))\times
C^{r}(X;C_r(F_{\sigma})). $ Such an interpretation allows us to identify a section
$\chi^r:X^r\rightarrow E$ on the $r$-skeleton $X^r\subset X$   with a cochain, denoted
by $c^r_{\chi},$ in $C^{r}(X;C_r(F_{\sigma}))$ via
$c^r_{\chi}(\sigma)=\chi^r|_{\sigma}:\Delta^r \rightarrow F_{\sigma}\subset E,$
 $\sigma\subset X^r$ is an $r$-simplex, $r\geq 0.$

The proof of the theorem just consists of choosing a solution $(h,f)$ of
(\ref{equation}) so that the transgressive component $f_{tr}=\{f_{tr}^r\}_{r\geq 0}$ is
specified by $f^r_{tr}=c^r_{\chi}$  with $\chi$ a section of $\xi.$  Indeed, since $F$
is path connected, there is a section $\chi^1$ on $X^1;$ consequently, we get the pairs
$(0,f^0_{tr}):=(0,c^0_{\chi})$ and $(0,f^1_{tr}):=(0,c^1_{\chi})$ with
$\gamma(f^1_{tr})=\delta(f^0_{tr}).$ Then $\delta(f^1_{tr})\in C^2(X;C_1(F))$ is a
$\gamma$-cocycle and $[\delta(f^1_{tr})]_{\gamma}\in C^2(X;H_1(F))$ becomes the
obstruction cocycle $c(\chi^1)\in C^2(X;\pi_1(F))$ for extending  of $\chi^1$ on $X^2;$
moreover, one can choose $h^2_{tr}$ to be satisfying
$\rho^*(h^2_{tr})=[\delta(f^1_{tr})]_{\gamma}$ (since $\rho^*$ is an epimorphism and a
weak equivalence).

Suppose by induction that we have constructed a solution $(h,f)$ of (\ref{equation})
and  a section $\chi^n$ on $X^n$ such that $h^r=0$  for $ 2\leq r\leq n,$ $f^n_{tr}=c^n_{\chi}$
 and \[\rho^*(h^{n+1}_{tr})=[\delta(f^n_{tr})]_{\gamma}\in  C^{n+1}(X;H_n(F)).\] In view of
(\ref{sequence}) we have $\nabla(h^{n+1})=0$ and from the above equality immediately
follows that \[u^{\#}(c(\chi^n))=\rho^*(h^{n+1}_{tr})\]
in which
$c(\chi^n)\in C^{n+1}(X;\pi_n(F))$ is  the obstruction  cocycle for extending  of $\chi^n$ on $X^{n+1}$  and
$u^{\#}:C^{n+1}(X;\pi_n(F))\rightarrow C^{n+1}(X; H_n(F)).$

Since $d(\xi)|_{X^m}=0,$ there is $p\in G(\mathcal{H})$ such that $(h\ast p)|_{X^m}=0;$
in particular, $(h\ast p)^{n+1}=0\in \mathcal{H}^{n+1,-n}$ and in view of (\ref{zero})
we establish the equality $h^{n+1}=-\nabla(p^n),$ i.e., $[h^{n+1}]=0\in
H^*(\mathcal{H},\nabla);$ in particular, $[h^{n+1}_{tr}]=0\in H^{n+1}(X;H_n(F)).$
Consequently, $[u^{\#} (c(\chi^n))]=0\in H^{n+1}(X; H_n(F)).$ Since $(1.1)_n$ is an
inclusion induced by $u^{\#},$ $[c(\chi^n)]=0\in H^{n+1}(X;\pi_n(F)).$ Therefore, we
can extend $\chi^n$ on $X^{n+1}$ without changing it on $X^{n-1}$ in a standard way.
Finally, put $f^{n+1}_{tr}=c^{n+1}_{\chi}$ and choose a $\nabla$-cocycle $h^{n+2}_{tr}$
satisfying $\rho^*(h^{n+2}_{tr})=[\delta(f^{n+1}_{tr})]_{\gamma}.$ The induction step
is completed.
\end{proof}

\section{Proof of Theorems \ref{null}, \ref{null2} and \ref{null3}}
First we recall  the following application of Theorem \ref{section} (\cite{berikaBUL})

\begin{theorem}\label{Dnull}
Let $f:X\rightarrow Y$ be a map such that $X$ is an $m$-polyhedron and
 the pair $(X,\Omega Y)$ satisfies $(1.1)_m.$
If\, $0=D(f):D(Y;H_*(\Omega Y))\rightarrow D(X;H_*(\Omega Y)),$ then $f$ is null
homotopic.
\end{theorem}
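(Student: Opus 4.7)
The plan is to reduce this to Theorem \ref{section} by considering the path-space fibration over $Y$. Let $\xi: PY \to Y$ denote the path fibration, which has fibre $\Omega Y$ and contractible total space. The standard adjunction identifies null-homotopies of $f$ with sections of the pullback fibration $f(\xi): f^*(PY) \to X$: indeed, a section $s(x) = (x, \gamma_x)$ with $\gamma_x(0) = *$ and $\gamma_x(1) = f(x)$ gives the null-homotopy $H(x,t) = \gamma_x(t)$, and conversely any null-homotopy yields such a section. So it suffices to show that $f(\xi)$ admits a global section.

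The fibration $f(\xi)$ has the same fibre $\Omega Y$ as $\xi$, so the hypothesis that $(X, \Omega Y)$ satisfies $(1.1)_m$ is exactly the hypothesis needed to apply Theorem \ref{section} to $f(\xi)$. Moreover, since $X$ is an $m$-dimensional polyhedron, $X = X^m$, so it is enough to verify that the predifferential $d(f(\xi)) \in D(X; H_*(\Omega Y))$ vanishes.

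This last step is immediate from the naturality of $d$ recalled in equation (\ref{pred}): we have
\[d(f(\xi)) = D(f)\bigl(d(\xi)\bigr),\]
and by hypothesis $D(f): D(Y; H_*(\Omega Y)) \to D(X; H_*(\Omega Y))$ is the zero map, sending the predifferential $d(\xi)$ of the path fibration to the distinguished element $0 \in D(X; H_*(\Omega Y))$. Hence $d(f(\xi)) = 0$, and Theorem \ref{section} produces a section of $f(\xi)$ on all of $X = X^m$, which by the first paragraph yields a null-homotopy of $f$.

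No step here is genuinely hard once the path-space fibration is brought in; the only point requiring care is the translation between sections of the pullback of $PY \to Y$ and null-homotopies of $f$, and the observation that the $(1.1)_m$ hypothesis transfers to $f(\xi)$ because it concerns only the fibre $\Omega Y$ and the base $X$. The rest is the naturality relation (\ref{pred}) combined with Theorem \ref{section} applied at the top dimension $m = \dim X$.
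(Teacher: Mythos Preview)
Your proof is correct and follows essentially the same approach as the paper: pull back the path fibration $\Omega Y\to PY\to Y$ along $f$, use naturality (\ref{pred}) together with $D(f)=0$ to conclude $d(f(\xi))=0$, and then invoke Theorem \ref{section} to produce a section, which amounts to a null-homotopy of $f$. Your write-up simply makes explicit the correspondence between sections and null-homotopies and the identification $X=X^m$, which the paper leaves implicit.
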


\begin{proof} Let $\Omega \rightarrow PY\overset{\pi}{\rightarrow} Y$ be the path
fibration and $f(\pi)$  the induced fibration. It suffices to show that $f(\pi)$ has a
section. Indeed,  (\ref{pred}) together with  $D(f)=0$ implies $d(f(\pi))=0,$ so
Theorem \ref{section} guaranties the existence of  the section.
\end{proof}

Now we are ready to prove the theorems stated in the introduction. Note that just below
we shall heavily use  multiplicative, non-commutative resolutions of  cga's that are
enriched with $\smile_1$ products. Namely,
 given a space $Z,$  recall  its  filtered model $f_Z:(RH(Z),d_h)\rightarrow C^*(Z)$
\cite{saneMono},\,\cite{saneFilt} in which  the underlying differential (bi)graded
algebra $(RH(Z),d)$ is a non-commutative version of
 Tate-Jozefiak resolution of the cohomology algebra $H^*(Z)$
(\cite{Tate},\,\cite{Jozefiak}), while $h$ denotes a perturbation of $d$  similar to
\cite{hal-sta}. Moreover, given  a map $X\rightarrow Y,$ there is  a dga map $
RH(f):(RH(Y),d_h) \rightarrow (RH(X),d_h)$ (not uniquely defined!) such that the
following diagram
\begin{equation}\label{diagram}
\begin{array}{ccccc}
  (RH(Y),d_h) & \overset{RH(f)}\longrightarrow & (RH(X),d_h) \vspace{1mm}\\
       ^{_{f_Y}} \!\! \downarrow  & &     \downarrow  ^{_{f_X}}  \\
C^*(Y) & \overset{C(f)}\longrightarrow & C^*(X)
\end{array}
\end{equation}
 commutes up to $(\alpha,\beta)$-derivation homotopy
with $\alpha=C(f)\circ f_Y$ and $\beta=f_X\circ RH(f)$
 (see, \cite{hueb},\,\cite{saneMono}).

\vspace{0.1in}

 \noindent\emph{Proof of Theorem
\ref{null}.} The non-trivial part of the proof is to show that $H(f)=0$ implies  $f$ is
null homotopic. In view of Theorem \ref{Dnull} it suffices to show that $D(f)=0.$
 By (\ref{diagram}) and  Proposition \ref{derivation} we
get the commutative diagram of pointed sets
$$
\begin{array}{ccccc}
  D(\mathcal{H}_Y) & \overset{D(\mathcal{H}(f))}\longrightarrow & D(\mathcal{H}_{X})
   \vspace{1mm}\\
      ^{ _{D(f_Y)}}  \downarrow  & &     \downarrow ^{_{D(f_X)}}  \\
D(Y;H_*(\Omega Y)) & \overset{D(f)}\longrightarrow & D(X;H_*(\Omega Y))
\end{array}
$$
in which $$\ \mathcal{H}_{X}=RH^*(X)\hat{\otimes} Hom(RH_*(\Omega Y)\,,RH_*(\Omega
Y)),$$
$$\mathcal{H}_{Y}=RH^*(Y)\hat{\otimes} Hom(RH_*(\Omega Y)\,, RH_*(\Omega Y))  $$
 (see Example \ref{example2}) and  the vertical maps are  induced by $f_X\otimes 1$
 and $f_Y\otimes 1;$ these maps are
 bijections by Theorem \ref{comparison}. Below we need an explicit form of $RH(f)$ to see
that $H(f)=0$ necessarily  implies $RH(f)|_{V^{(m)}}=0$ with $V^{(m)}=\bigoplus_{1\leq
i+j \leq m} V^{i,j};$ hence, the restriction of the map $\mathcal{H}(f):=RH(f)\otimes
1$ to $RH^{(m)}\otimes 1,$ $RH^{(m)}=\bigoplus_{1\leq i+j\leq m}R^iH^j(Y),$ is zero,
and, consequently,
\begin{equation}\label{composition}
 D(f_X)\circ D(\mathcal{H}(f))=0.
\end{equation}

First observe that any multiplicative resolution $(RH,d)=(T(V^{*,*}),d),\,
V=\langle\mathcal{V}\rangle,$ of a cga $H$ admits a sequence of multiplicative
generators, denoted by
\begin{equation}\label{cups1}
a_1\smile_1\cdots \smile_1a_{n+1}\in \mathcal{V}^{-n,*},\ \  a_i\in \mathcal{V}^{0,*},   \ \ n\geq 1,
\end{equation}
 where   $a_i
\smile_1a_j=(-1)^{(|a_i|+1)(|a_j|+1)}a_j\smile_1 a_i $ and $a_i\neq a_j$ for $i\neq j.$
Furthermore, the expression $ab\smile _1 uv$ also has a sense
    by means of formally (successively) applying the Hirsch formula
\begin{equation}\label{hirsch}
c\smile _{1}(ab)=(c\smile _{1}a)b+(-1)^{|a|(|c|+1)}a(c\smile _{1}b).
\end{equation}
 The resolution differential $d$  acts on  (\ref{cups1})  by  iterative application
  of the formula
\[d(a\smile_1b)=da\smile_1 b-(-1)^{|a|}a\smile_1db +(-1)^{|a|}ab-(-1)^{|a|(|b|+1)}ba .\]
 Consequently, we get
\[d(a_1\smile_1\cdots \smile_1a_{n})= \sum_{(\mathbf{i};\mathbf{j})} (-1)^{\epsilon}
 (a_{i_1}\smile_1\cdots \smile_1a_{i_k})\cdot(a_{j_1}\smile_1\cdots \smile_1a_{j_\ell})\]
where the summation is over unshuffles $(\mathbf{i};\mathbf{j})=(i_1<\cdots <i_k\,
;j_1<\cdots<j_{\ell} )$ of $\underline{n}.$

In the case of $H$ to be $m$-relation free with a basis $U^i\subset H^i,$ $i\leq m,$
 we have that
the minimal multiplicative resolution $RH$ of $H$ can be built  by taking $\mathcal{V}$
with $\mathcal{V}^{0,i}\approx \mathcal{U}^i,i\leq m,$ and  $\mathcal{V}^{-n,i},n> 0,$
to be the set
 consisting of monomials (\ref{cups1}) for $1\leq i-n\leq m$  (compare \cite{saneFilt}). The verification
of the acyclicity in the negative resolution degrees of $RH$ restricted to  the range
$RH^{(m)}$ is straightforward (see also Remark \ref{1}). Regarding the map $RH(f),$ we
can choose it on $RH^{(m)}$ as follows.  Let $R_0H(f):R_0H(Y)\rightarrow R_0H(X)$ be
determined by $H(f)$ in an obvious way and then define $RH(f)$ for $a\in
\mathcal{V}^{(m)} $ by
\[
RH(f)(a)=\left\{\!\!\!
\begin{array}{llll}
R_0H(f)(a), &  a\in \mathcal{V}^{0,*},\vspace{2mm}\\
R_0H(f)(a_1)\smile_1\cdots \smile_1 R_0H(f)(a_n), & a=a_1\smile_1\cdots \smile_1
a_{n+1},\\& a\in  \mathcal{V}^{-n,*},a_i\in \mathcal{V}^{0,*},n\geq 1,

\end{array}
\right.
\]
and extend to $RH^{(m)}$ multiplicatively.
 Furthermore, $f_X$
and $f_Y$ are assumed to be preserving  the generators of the form (\ref{cups1}) with
respect to the right most association of $\smile_1$ products in question. Since $h$ annihilates monomials  (\ref{cups1}) and  the existence
of formula (\ref{hirsch})
   in a simplicial cochain complex,
$f_X$ and $f_Y$ are automatically   compatible with the differentials involved. Then
the maps $\alpha$ and $\beta$ in (\ref{diagram}) also preserve $\smile_1$ products, and
 become homotopic by an  $(\alpha,\beta)$-derivation homotopy
 $s: RH(Y)\rightarrow C^*(X)$ defined as follows:
  choose $s$  on $\mathcal{V}^{0,*}$ by $ds=\alpha-\beta$ and
 extend on $\mathcal{V}^{-n,*}$ inductively by
\[s(a_0\smile_1z_n)=-\alpha(a_0)\smile_1 s(z_n)+s(a_0)\smile_1 \beta(z_n)+s(z_n)s(a_0),\ \ \ n\geq 1,\]
in which $z_1=a_1$ and $z_k=a_1\smile_1\cdots \smile_1a_k$ for $k\geq 2,\,a_i\in \mathcal{V}^{0,*}.$  Clearly,
$H(f)=0$ implies $RH(f)|_{V^{(m)}}=0.$ Since (\ref{composition}),
   $D(f)=0$ and so
$f$ is null homotopic by Theorem \ref{Dnull}. Theorem is proved. \vspace{0.2in}

\begin{remark}\label{1} Let $\mathcal{V}^{(m)}_n$ be a subset of  $\mathcal{V}^{(m)}$
 consisting of
all monomials formed by the $\cdot $ and
$\smile_1$ products evaluated on a string of variables $a_1,...,a_{n}.$ Then there is a bijection of
$\mathcal{V}^{(m)}_n$
 with the set of all faces of
  the permutahedron $P_n$
(\cite{milgram},\,\cite{SU2}) such that the resolution differential $d$ is compatible with the cellular differential of $P_n$    (compare \cite{MacLane}).
In particular, the monomial $a_1\smile_1\cdots\smile_1a_n$ is assigned to the top cell of $P_n,$ while the monomials $a_{\sigma(1)}\cdots a_{\sigma(n)},\sigma\in S_n,$
to the vertices of $P_n$ (see Fig. 1 for $n=3$). Thus, the acyclicity of $P_n$ immediately implies the acyclicity of $RH^{(m)}$ in the negative resolution degrees as desired.

\end{remark}

\newpage

\unitlength=1.00mm \special{em:linewidth 0.4pt} \linethickness{0.4pt}
\begin{picture}(87.67,40.33)
\ \ \ \ \ \ \ \ \ \ \ \ \ \ \ \ \ \ \ \ \ \ \ \ \put(41.66,36.67){\line(1,0){30.33}}
\put(41.66,6.67){\line(0,1){30.00}} \

\put(71.99,36.67){\line(0,-1){30.00}}

 \put(71.99,6.67){\line(-1,0){30.33}}
\put(71.99,36.67){\circle*{1.33}}
 \put(41.66,36.67){\circle*{1.33}}
\put(41.66,22.00){\circle*{1.33}}
 \put(41.66,6.67){\circle*{1.33}}
\put(71.99,6.67){\circle*{1.33}}

 \put(71.99,21.67){\circle*{1.33}}

 \put(56.67,22.00){\makebox(0,0)[cc]{$a\smile_1b\smile_1c$}}
\put(57.00,2.67){\makebox(0,0)[cc]{$(a\smile_1b)c$}}
\put(57.67,40.33){\makebox(0,0)[cc]{$c(a\smile_1b)$}}
\put(31.33,13.67){\makebox(0,0)[cc]{$a(b\smile_1c)$}}
\put(83.67,14.00){\makebox(0,0)[cc]{$b(a\smile_1 c)$}}

\put(31.00,28.67){\makebox(0,0)[cc]{$(a\smile_1c)b$}}
\put(83.67,29.33){\makebox(0,0)[cc]{$(b\smile_1c)a$}}
\end{picture}

\begin{center}Figure 1.
Geometrical interpretation of some syzygies  involving $\smile_1$ product as homotopy
for commutativity  in the resolution $RH.$
\end{center}
\vspace{0.2in}
\begin{remark}
 An example  provided by the Hopf
map $f:S^3\rightarrow S^2$ shows
 that the implication $H(f)=0 \Rightarrow RH(f)|_{V^{(k)}}=0,\,k<m$ for $RH(f)$ making (\ref{diagram}) commutative up to
  $(\alpha,\beta)$-derivation homotopy
 is not true in general. More precisely,
let $x\in R^0H^2(S^2)$ and $y\in R^0H^3(S^3)$ with $\rho x\in H^2(S^2)$ and $\rho y\in
H^3(S^3)$ to be the generators, and let
   $x_1\in R^{-1}H^4(S^2)$ with $dx_1=x^2.$ Then
   $s(x^2)=\alpha(x)s(x)$ is a cocycle in $ C^3(S^3)$  with  $d_{S^3}s(x)=\alpha(x)$ (since $\beta=0$)   and  $[\alpha(x)s(x)]=\rho y .$
  Consequently, while  $H(f)=0=R^0H(f),$
   a map $RH(f):RH(S^2)\rightarrow RH(S^3)$ required in (\ref{diagram})
   has a non-trivial component increasing the resolution degree: Namely,
   $R^{-1}H^4(S^2)\rightarrow R^0H^3(S^3),\, x_1\rightarrow y.$
\end{remark}

\noindent\emph{Proof of Theorem  \ref{null2}.} The conditions that
 $u_i: \pi_i(\Omega Y)\rightarrow H_i(\Omega Y)$ is an inclusion and
$\operatorname{Tor}\left(H^{i+1}(X), H_i(\Omega Y)/\pi_i(\Omega Y)\right)=0$ for $1\leq i<m,$
immediately implies $(1.1)_m.$ So the theorem follows from Theorem 1.

\vspace{0.1in}

\noindent\emph{Proof of Theorem  \ref{null3}.}
 Since the homotopy equivalence $\Omega BG\simeq
G,$
  the conditions of
Theorem 2 are satisfied: Indeed,  there is the following
 commutative diagram
\begin{equation*}
\begin{array}{ccccc}
  \pi_k(G) & \overset{u_k}\longrightarrow & H_k(G) \vspace{1mm}\\
       i_{\pi} \downarrow  & &     \downarrow  i_{H}  \\
\pi_k(G) \otimes \mathbb Q & \overset{u_k\otimes 1}\longrightarrow & H_k(G)
\otimes \mathbb Q
\end{array}
\end{equation*}
where $i_{\pi},i_{H}$ and $u_k\otimes 1$ are the standard inclusions (the last one is a
consequence of a theorem of Milnor-Moore). Consequently,
 $u_k:\pi_k(\Omega BG) \rightarrow  H_k(\Omega BG),k< m,$ is an inclusion, too.
 Theorem is proved.
 \hspace{2.5in} $\Box$

\vspace{0.2in}

\vspace{5mm}

\end{document}